\theoremstyle{plain}
\newtheorem{thm}{Theorem}[section]
\newtheorem{lem}[thm]{Lemma}
\newtheorem{cor}[thm]{Corollary}
\theoremstyle{definition}
\newcommand{\bB}{{\mathbb{B}}}
\newcommand{\bC}{{\mathbb{C}}}
\newcommand{\bS}{{\mathbb{S}}}
\newcommand{\bT}{{\mathbb{T}}}
\newcommand{\rC}{\mathrm{C}}
\renewcommand{\phi}{\varphi}
\newcommand{\upchi}{{\raise.35ex\hbox{$\chi$}}}
\newcommand{\ol}{\overline}
\newcommand{\AB}{{\mathrm{A}(\mathbb{B}_d)}}
\newcommand{\Hinf}{{H^\infty(\mathbb{D})}}
\newcommand{\HB}{{H^\infty(\mathbb{B}_d)}}
\newcommand{\FORAL}{\text{ for all }}
\newcommand{\qforal}{\quad\text{for all}\quad}
\newcommand{\re}{\operatorname{Re}}
\newcommand{\TS}{\operatorname{TS}}
\newcommand{\AC}{\operatorname{AC}}
\newcommand{\ext}{\operatorname{ext}}
\newcommand{\wexp}{\operatorname{{\mbox{$w^*$}\!-exp}}}
\begin{document}
\title[Extreme points in the predual of $H^\infty(\bB_d)$]{The unit ball of the predual of $H^\infty(\bB_d)$\\has no extreme points}

\author[R. Clou\^atre]{Rapha\"el Clou\^atre}
\address{Department of Mathematics, University of Manitoba, Winnipeg, MB, Canada}
\email{raphael.clouatre@umanitoba.ca\vspace{-2ex}}
\thanks{The first author is partially supported by an FQRNT postdoctoral fellowship.}

\author[K.R. Davidson]{Kenneth R. Davidson}
\address{Pure Mathematics Dept., University of Waterloo,
Waterloo, ON, Canada}
\email{krdavids@uwaterloo.ca}
\thanks{The second author is partially supported by an NSERC grant.}

\begin{abstract}
We identify the exposed points of the unit ball of the dual space of the ball algebra. As a corollary, we show that the predual of $\HB$ has no extreme points in its unit ball.
\end{abstract}

\subjclass[2010]{30H05, 46J15, 47L50}
\keywords{Bounded analytic functions, unit ball, extreme points, predual}
\maketitle

%%%%%%%%%%%%%%%%%%%%%%%%%%%%%%%%%%%%%%%
%%%%%%%%%%%%%%%%%%%%%%%%%%%%%%%%%%%%%%%
\section{Introduction} \label{S:intro}

The algebra $\HB$ of bounded analytic functions on the unit ball $\bB_d$ in $\bC^d$ is a weak-$*$ closed subspace 
of $L^\infty(\bS_d,\sigma)$, where $\sigma$ is Haar measure on the unit sphere $\bS_d$ (i.e., the unique rotation invariant probability measure on $\bS_d$).
In particular, it has a predual that can be identified as 
\[
\HB_* \simeq L^1(\bS_d,\sigma)/\HB_\perp
\]
and a corresponding weak-$*$ topology.
In the one dimensional case of the unit disc, Ando \cite{Ando} proved that $\Hinf$ has a unique predual and that the unit ball of this predual has no extreme points.
In this note, we generalize this second fact to higher dimensions: we establish that the unit ball of $\HB_*$ has no extreme points. It is unknown whether $\HB$ has a unique predual when $d>1$, although some of its natural analogues have that property \cite{DW, KY}.

Our proof is not straightforward and is based on a description of the dual space of the ball algebra $\AB$, which consists of analytic functions on $\bB_d$ that are continuous on $\ol{\bB_d}$. 
The idea arose in connection with a larger work \cite{CD} investigating the analogue of the inclusion $\AB\subset \HB$ in the Drury-Arveson space. 
The situation is quite different there as the appropriate version of $\HB$ has many extreme points in the unit ball of its predual. 
Nevertheless, it is the basic arguments used in that context that led to the results of this paper.

The dual of the ball algebra is well understood (see \cite[Chapter 9]{Rudin} for the complete story). 
We briefly review its main features here. 
By the maximum principle, $\AB$ may be viewed as a closed subalgebra of $\rC(\bS_d)$. 
It thus follows from the Hahn-Banach theorem that every functional on $\AB$ has a representing measure on the sphere. 
Accordingly, much information about the dual space is gained from a careful analysis of the representing measures that can arise.
A regular Borel measure $\mu$ on $\bS_d$ is called \emph{Henkin} if every bounded sequence $\{f_n\}_n\subset \AB$ which 
converges to $0$ pointwise on the open ball $\bB_d$ satisfies 
\[ \lim_{n\to\infty} \int f_n \,d\mu = 0 .\]
In that case, a theorem of Valskii \cite{Valskii} shows that there is a measure $\nu \in \AB^\perp$ such that $\mu-\nu$ is absolutely continuous with respect to $\sigma$. 
Hence, the functionals on $\AB$ given by Henkin measures are precisely those  which extend to a weak-$*$ continuous linear functional on $\HB$: integration against $\mu-\nu$ is the desired extension.
In addition, Henkin measures are completely characterized as those which are absolutely continuous with respect to some positive representing measure for the functional of evaluation at the origin \cite{Henkin,ColeRange}. 
Because of this fact, we denote the space of Henkin measures by $\AC$.
At the other extreme, a regular Borel measure on $\bS_d$ is called \emph{totally singular} if it is singular with respect to every positive representing measure for evaluation at the origin. Let $\TS$ be the space of totally singular measures.
The abstract F\&M Riesz Theorem, also known as the Glicksberg-K\"onig-Seever decomposition \cite{Glicksberg, KonigSeever}, 
shows that every measure on $\bS_d$ decomposes uniquely as a sum of a Henkin measure and a totally singular one.

Bringing together all of these results, we obtain the following description of the dual space
\[ \AB^* \simeq \AC/\AB^\perp \oplus_1 \TS \simeq \HB_* \oplus_1 \TS . \]
Since this is a dual space, the set of extreme points of its unit ball is large enough so that the weak-$*$ closed convex hull of that set coincides with the whole unit ball by the Krein-Milman theorem.
On the other hand, the point masses $\lambda\delta_\zeta$ for $\zeta\in\bS_d$ and $|\lambda|=1$ are readily seen to be extreme.  
A standard Hahn-Banach separation argument shows that the weak-$*$ closed convex hull of these point masses is the whole unit ball. 
It is therefore plausible that the unit ball of $\HB_*$ may have no extreme points. 

Our proof is based on some results in convexity.
We use a fact mentioned in an old paper of Klee \cite{Klee}: 
if $E$ is a separable Banach space and $C$ is a weak-$*$ compact convex subset of $E^*$,
then the weak-$*$ closure of the weak-$*$ exposed points of $C$ contains all extreme points 
(see Theorem 4.5 of \cite{Klee} and the remark beginning at the bottom of the same page where he mentions that the corresponding statement is valid in our setting).
Klee claims that this is an easy modification of results of Yosida-Fukamiya \cite{YF} and Milman \cite{Milman}.
As this result does not seem to be familiar even to people working on Banach space and 
we have not found it stated in current books, in Section 2 we provide a proof inspired by \cite{Klee} for the reader's convenience.

With this result in hand, we first show in Section 3 that the weak-$*$ exposed points of the unit ball of $\AB^*$ are
precisely the point masses $\lambda\delta_\zeta$ mentioned above.
Since these form a weak-$*$ compact set, they are in fact the extreme points of the unit ball of $\AB^*$. 
The desired consequence about the absence of extreme points in the unit ball of $\HB_*$ is an immediate consequence.

%%%%%%%%%%%%%%%%%%%%%%%%%%%%%%%%%%%%%%%
%%%%%%%%%%%%%%%%%%%%%%%%%%%%%%%%%%%%%%%
\section{Exposed points in weak-$*$ convex sets}

Let $C$ be a closed convex set in a locally convex topological vector space $E$. 
An extreme point $x_0$ of $C$ is an \textit{exposed point of $C$} if there is a closed supporting hyperplane 
$H\subset E$ for $C$ at $x_0$ such that $H \cap C = \{x_0\}$. 
When $x_0\neq 0$ and $0\in C$, this is equivalent to the existence of $\phi_0\in E^*$ such that 
\[ \re\phi_0(x) < \re\phi_0(x_0)=1 \qforal x \in C\setminus\{x_0\} .\]
We denote by $\exp(C)$ the set of exposed points of $C$, and by $\ext(C)$ the set of extreme points of $C$.
In particular, if $E$ is a dual space equipped with its weak-$*$ topology and $C$ is a weak-$*$ closed convex set,
then the weak-$*$ exposed points are those extreme points of $C$ which are exposed by weak-$*$ continuous functionals.
They will be denoted by $\wexp(C)$.

A boundary point $x_0$ of $C$ is a \textit{smooth point of $C$} if there is a unique supporting hyperplane $H\subset E$ for $C$ at $x_0$. 
When $x_0\neq 0$ and $0\in C$, this is equivalent to the existence of a unique $\phi_0\in E^*$ such that 
\[ \re\phi_0(x) \le \re\phi_0(x_0)=1  \qforal x \in C .\]

If two locally convex topological vector spaces  $E$ and $F$ are in duality,
then the \emph{polar} of a convex set $C\subset E$ is
\[  C^0 = \{ \phi\in F : \re \phi(x) \le 1 \FORAL x \in C \} .\]
(Warning: one often calls the set $\{ \phi\in F : |\phi(x)| \le 1 \FORAL x \in C \}$ the polar as well.)
A straightforward consequence of the Hahn-Banach theorem shows that if in addition, 
$0 \in C$ and $C$ is closed in the $F$-topology, then $C^{00} = C$. 
%(e.g. see \cite[Theorem~V.1.8]{Conway}). 

There is a certain duality between smooth points of a closed convex set $C$ and exposed points of its polar $C^0$ which we now describe. 
Assume that $0\in C$. 
Suppose that $x_0\neq 0$ is a smooth point of $C$ with unique associated functional $\phi_0 \in F$ satisfying 
\[
 \re \phi_0(x)\leq \re \phi_0(x_0)=1 \qforal x\in C . 
\]
The uniqueness shows that $\phi_0$ is an exposed point of $C^0$, as demonstrated by the functional $x_0$.
Conversely, if $x_0$ is an exposed point of $C$, suppose that $\phi_0 \in C^0$ satisfies 
\[ 
  \re \phi_0(x) < \re \phi_0(x_0)=1 \qforal x\in C . 
\]
Then $\phi_0$ is a smooth point of $C^0$ with unique supporting hyperplane determined by $x_0$.

Another fact that we require is Mazur's smoothness theorem \cite{Mazur} stating that the smooth points of a convex set $C$ 
with non-empty interior coincide with the points of G\^ateaux differentiability of the Minkowski functional, 
and in a separable Banach space $E$ that set is dense in the boundary of $C$ (e.g. see page 171 in \cite{Holmes}).
The following theorem is due to Klee \cite{Klee}.

%%%%%%%%%%%%%%%%%%%%%%%%%%%%%%%%%%%%%%%
\begin{thm} \label{T:Klee}
Let $E$ be a separable Banach space.
Suppose that $C$ is a weak-$*$ compact convex subset of $E^*$.
Then the weak-$*$ closed convex hull of $\wexp(C)$ is $C$, and the weak-$*$ closure of $\wexp(C)$ contains all extreme points.
\end{thm}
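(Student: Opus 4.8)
The plan is to establish the first assertion, that $\wsclos{\conv\wexp(C)}=C$, and to read off the second from it: once the first holds, the partial converse to the Krein--Milman theorem (Milman \cite{Milman}), applied inside the weak-$*$ compact convex set $C$, gives $\ext(C)\subseteq\wsclos{\wexp(C)}$, which is precisely the second assertion. For the first assertion I would combine the smooth-to-exposed duality recorded above with Mazur's theorem. Since $C$ is weak-$*$ compact it is norm bounded, say $\|x\|\le R$ for all $x\in C$, and since exposedness and weak-$*$ exposedness are preserved by translation (a weak-$*$ homeomorphism of $E^*$) I may assume $0\in C$. Form the polar $D:=C^0=\{e\in E:\re\ip{e,x}\le1\text{ for all }x\in C\}\subseteq E$. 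Boundedness of $C$ forces $D$ to contain the ball of radius $1/R$, so $0$ lies in the norm interior of $D$, while $C$ weak-$*$ closed convex with $0\in C$ yields $D^0=C^{00}=C$. Applying Mazur's theorem to $D$ (here $E$ is separable and $D$ has non-empty interior), the smooth points of $D$ are dense in $\partial D$; by the smooth-to-exposed direction of the duality, each smooth point $e_0\in\partial D$ produces its unique supporting functional $\phi_0\in E^*$, an exposed point of $D^0=C$ that is exposed by $e_0\in E$. As $e_0$ is weak-$*$ continuous, $\phi_0\in\wexp(C)$, so $\wexp(C)$ contains a family indexed by the dense set of smooth points of $\partial D$.

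Next I would show these already recover $C$. Put $K:=\wsclos{\conv\wexp(C)}$, a weak-$*$ closed convex subset of $C$, and suppose toward a contradiction that some $x_1\in C\setminus K$. Hahn--Banach separation in the duality $(E^*,\text{weak-}*)\leftrightarrow E$ furnishes $e_1\in E$ with $\re\ip{e_1,x_1}>\sup_{\phi\in K}\re\ip{e_1,\phi}\ge\sup_{\phi\in\wexp(C)}\re\ip{e_1,\phi}=:\alpha$. Writing $s:=h_C(e_1)=\max_{x\in C}\re\ip{e_1,x}$, this gives $s\ge\re\ip{e_1,x_1}>\alpha$. The point is that the exposing functionals can be driven arbitrarily close to the direction of $e_1$: choosing smooth points $e^{(n)}\to e_1/s$ in $\partial D$ yields $\phi_n\in\wexp(C)$ with $\re\ip{e^{(n)},\phi_n}=1$, and since $\{\phi_n\}\subset C$ is norm bounded the identity $\re\ip{e_1,\phi_n}=s\,\re\ip{e^{(n)},\phi_n}+\re\ip{e_1-se^{(n)},\phi_n}$ forces $\re\ip{e_1,\phi_n}\to s$. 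This contradicts $\re\ip{e_1,\phi_n}\le\alpha<s$, whence $K=C$.

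The main obstacle I anticipate is the degenerate direction $s=h_C(e_1)=0$, where $e_1$ is a recession direction of the unbounded set $D$, there is no boundary point $e_1/s$ to approximate, and the normalization above collapses. I would remove it by routing the limiting step through the support functional rather than through $\partial D$: the function $h_C$ is continuous, convex, and Lipschitz with constant $R$ on $E$, so Mazur's theorem supplies a Gâteaux differentiability point $e_0$ with $\|e_0-e_1\|<\eps/(2R)$; its subgradient is a single point $x_0\in C$, necessarily in $\wexp(C)$, and the estimate $\re\ip{e_1,x_0}\ge h_C(e_0)-R\|e_1-e_0\|\ge h_C(e_1)-2R\|e_1-e_0\|>s-\eps$ holds uniformly in every direction with no normalization. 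This version dispatches the degenerate case and simultaneously compresses the separation argument; the remaining routine points are that the subgradient of $h_C$ at $e_0$ lands in $C$ rather than merely in $E^{**}$, and that Gâteaux differentiability is equivalent to the subgradient being a singleton.
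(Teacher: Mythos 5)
Your proof, in its final support-function form, is correct, and it reaches the theorem by a genuinely different route from the paper's. Both arguments manufacture weak-$*$ exposed points from Mazur's theorem together with the smooth/exposed duality, and both deduce the second assertion from the first via Milman's converse to the Krein--Milman theorem; the divergence is in how $K:=\wsclos{\conv\wexp(C)}=C$ is forced. The paper works with polars: it reduces the claim to $K^0=C^0$, and from a hypothetical $e_0\in K^0\setminus C^0$ constructs an open set $U\subseteq K^0$ meeting the boundary of $C^0$, finds (by the boundary-density form of Mazur's theorem) a smooth point $e_1$ of $C^0$ inside $U$, converts it by duality into a weak-$*$ exposed point $\phi_1\in K$ with $\re\phi_1(e_1)=1$, and obtains a contradiction because $e_1$ must then be a boundary-type point of $K^0$ while lying in its interior. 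You instead separate a putative $x_1\in C\setminus K$ directly and prove the quantitative fact that weak-$*$ exposed points nearly attain the support function in every direction: a G\^ateaux differentiability point $e_0$ of the $R$-Lipschitz convex function $h_C$ near $e_1$ (Mazur's differentiability theorem) has singleton subdifferential $\{x_0\}$, and $\partial h_C(e_0)$ is exactly the face of $C$ exposed by $e_0$, so $x_0\in\wexp(C)$ and $\re\ip{e_1,x_0}>h_C(e_1)-\eps$, contradicting the separation. You were right to flag the degenerate case $s=h_C(e_1)=0$ in your first normalization $e^{(n)}\to e_1/s$ --- that version does break when $e_1$ is a recession direction of $C^0$ --- and the support-function reformulation genuinely repairs it, while also dispensing with the translation making $0\in C$. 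What the paper's route buys is fidelity to Klee's original geometric argument, using Mazur only in the form ``smooth points are dense in the boundary of a convex body'' plus bipolar calculus; what yours buys is a degenerate-case-free, quantitative argument that is essentially the standard modern proof (as in Phelps' book on convex functions and differentiability) that in a separable space every weak-$*$ compact convex subset of the dual is the weak-$*$ closed convex hull of its weak-$*$ exposed points. Only trivial repairs are needed: subgradients of $h_C$ lie in $E^*$, not $E^{**}$ (since $h_C$ is defined on $E$), and the two ``routine points'' you list --- that $\partial h_C(e_0)\subseteq C$ with every subgradient attaining $h_C(e_0)$ at $e_0$ (a one-line separation-plus-subadditivity check), and that G\^ateaux differentiability of a continuous convex function is equivalent to its subdifferential being a singleton --- are indeed standard and go through as you expect.
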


\begin{proof}
Consider the dual pair $(E, E^*)$, where $E^*$ is endowed with the weak-$*$ topology. 
We may assume that $0$ belongs to $C$.
Let $K\subset E^*$ be the weak-$*$ closed convex hull of $\wexp(C)$.
By virtue of the duality mentioned above, to achieve the equality $K=C$, it suffices to prove that $K^0=C^0$. 
Now, $K \subset C$, whence $C^0 \subset K^0$.

Suppose that there is a point $e_0 \in K^0 \setminus C^0$.
Since $C$ is weak-$*$ compact, it must be norm bounded.
Hence there is an $R>0$ such that $\|y\|\leq R$ for every $y\in C$. 
In particular, we see that $e\in C^0$ whenever $\|e\|<1/R$. 
Thus $C^0$ has non-empty interior.
Let $U_0$ be the algebraic convex hull of $\{ e\in E : \|e\|<1/R\}$ and $\{e_0\}$. 
Then, it is easily verified that the set $U=U_0\setminus \{e_0\}\subset K^0$ is open.

Since $0\in C^0$, $e_0\notin C^0$ and $C^0$ is closed and convex, there exists $0< t<1$ such that $te_0$ belongs to the boundary $B$ of $C^0$.  
Thus, $B\cap U$ is a non-empty relatively open subset of $B$. 
Invoking Mazur's smoothness theorem, we see that there is a smooth point $e_1\neq 0$ of $C^0$ in $U \cap B$.
By the remarks preceding the proof, the unique functional $\phi_1 \in C^{00}=C$ which satisfies
\[
 \re \phi_1(x)\leq \re \phi_1(e_1)=1 \qforal x\in C^0 . 
\]
is an exposed point of $C$.
Therefore $\phi_1$ belongs to $K$ by definition, and trivially $\phi_1\in \wexp(K)$.
Using the remark again, we see that $e_1$ is a smooth point of $K^0$.
However this is impossible because $e_1\in U$ lies in the interior of $K^0$.
Thus we must have $K=C$ as claimed.
 
The last statement now follows from the usual converse to the Krein-Milman Theorem (e.g. see \cite[Theorem~V.7.8]{Conway}).
\end{proof}

In our application of this result, $C$ will be the unit ball of $E^*$.
Since in this case $C$ is balanced, we could use the other definition of polar instead.
The details of the proof remain the same.

%%%%%%%%%%%%%%%%%%%%%%%%%%%%%%%%%%%%%%%
%%%%%%%%%%%%%%%%%%%%%%%%%%%%%%%%%%%%%%%
\section{The predual of $\HB$}

Following the plan outlined in the introduction, we wish to find all exposed points of the unit ball of $\AB^*$. Let us first deal with the extreme points. For a Banach space $X$, we denote by $b_1(X)$ its open unit ball.

%%%%%%%%%%%%%%%%%%%%%%%%%%%%%%%%%%%%%%%
\begin{lem} \label{extreme}
The extreme points of $\ol{b_1(\AB^*)}$ decompose as
\[ \ext(\ol{b_1(\AB^*)}) = \ext(\ol{b_1(\HB_*)}) \cup \ext(\ol{b_1(\TS)}) .\]
\end{lem}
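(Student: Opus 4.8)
The plan is to discard the function-theoretic content entirely and argue abstractly from the isometric $\ell^1$-direct sum decomposition $\AB^* \simeq \HB_* \oplus_1 \TS$ recorded in the introduction. Under this identification $\HB_*$ and $\TS$ sit inside $\AB^*$ as the subspaces $\HB_*\oplus\{0\}$ and $\{0\}\oplus\TS$, and the closed unit ball becomes
\[
\ol{b_1(\AB^*)} = \{\, (x,y) : \|x\|+\|y\| \le 1 \,\}.
\]
So the lemma follows from the purely Banach-space statement that for an $\ell^1$-sum $X\oplus_1 Y$ one has $\ext(\ol{b_1(X\oplus_1 Y)}) = \big(\ext(\ol{b_1(X)})\times\{0\}\big)\cup\big(\{0\}\times\ext(\ol{b_1(Y)})\big)$, which I would prove and then specialize to $X=\HB_*$, $Y=\TS$.

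For the forward inclusion, suppose $(x_0,y_0)$ is an extreme point. First I would note $\|x_0\|+\|y_0\|=1$, since any point of the open unit ball is an interior point of a convex set and hence a midpoint of two distinct points. Next I would show that one coordinate vanishes: if both $\|x_0\|=a>0$ and $\|y_0\|=b>0$ with $a+b=1$, then the two norm-one points $(x_0/a,0)$ and $(0,y_0/b)$ are distinct and satisfy $(x_0,y_0)=a\,(x_0/a,0)+b\,(0,y_0/b)$, contradicting extremality. Assuming say $y_0=0$, so $\|x_0\|=1$, a splitting $x_0=\tfrac12(u+v)$ inside $\ol{b_1(X)}$ lifts to $(x_0,0)=\tfrac12\big((u,0)+(v,0)\big)$ inside $\ol{b_1(X\oplus_1 Y)}$, forcing $u=v$; thus $x_0\in\ext(\ol{b_1(X)})$.

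The converse inclusion is the crux, and it is where the equality case of the triangle inequality does the work. Given $x_0\in\ext(\ol{b_1(X)})$ with $\|x_0\|=1$, I would take a representation $(x_0,0)=\tfrac12\big((u_1,v_1)+(u_2,v_2)\big)$ with both summands in $\ol{b_1(X\oplus_1 Y)}$; then $v_2=-v_1$, and
\[
1=\|x_0\|\le \tfrac12\big(\|u_1\|+\|u_2\|\big)\le \tfrac12\big((1-\|v_1\|)+(1-\|v_1\|)\big)=1-\|v_1\|,
\]
so $v_1=0$ and $\|u_1\|=\|u_2\|=1$. Extremality of $x_0$ in $\ol{b_1(X)}$ then gives $u_1=u_2=x_0$, so $(x_0,0)$ is extreme; the symmetric argument handles the $\TS$ factor. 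The only genuine subtlety is this chain of inequalities and the bookkeeping of the equality cases, which pins down both the vanishing of the cross-coordinate and the attainment of norm one needed to invoke extremality in the factor; everything else is routine.
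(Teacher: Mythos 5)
Your proof is correct and takes essentially the same route as the paper: both arguments discard the function theory entirely and work purely from the $\ell^1$-decomposition $\AB^* = \HB_* \oplus_1 \TS$, proving the forward inclusion by writing the big ball as the convex hull of the two factor balls and the converse by splitting a midpoint representation along the direct sum. If anything, your equality-case bookkeeping is more careful than the paper's at one spot: the paper asserts $\psi_2=\theta_2=0$ directly from $\HB_*\cap\TS=\{0\}$, which by itself only gives $\psi_2=-\theta_2$, whereas your triangle-inequality chain $1=\|x_0\|\le \tfrac12(\|u_1\|+\|u_2\|)\le 1-\|v_1\|$ supplies the missing norm argument that forces the cross-coordinates to vanish.
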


\begin{proof}
This essentially follows from the $\ell_1$-decomposition $\AB^* = \HB_* \oplus_1 \TS$. 
Indeed, we see that $\ol{b_1(\AB^*)}$ is the convex hull of $\ol{b_1(\HB_*)} \cup \ol{b_1(\TS)}$, whence
extreme points of $\ol{b_1(\AB^*)}$ must lie in $\ext(\ol{b_1(\HB_*)}) \cup \ext(\ol{b_1(\TS)})$. 
Conversely, let $\phi\in \ext(\ol{b_1(\HB_*)})$ and assume that
\[
\phi=\frac{1}{2}\psi+\frac{1}{2}\theta
\]
for some $\psi,\theta\in \ol{b_1(\AB^*)}$. 
Write $\psi=\psi_1+\psi_2$ and $\theta=\theta_1+\theta_2$, 
where $\psi_1,\theta_1\in \ol{b_1(\HB_*)}$ and $\psi_2,\theta_2\in \ol{b_1(\TS)}$. 
Since $\HB_*\cap \TS=\{0\}$, we must have
\[
\phi=\frac{1}{2}\psi_1+\frac{1}{2}\theta_1
\]
and $\psi_2=\theta_2=0$. By choice of $\phi$, this implies $\phi=\psi=\theta$ which shows that $\phi\in \ext(\ol{b_1(\AB^*)})$. 
A similar argument shows that $\ext(\ol{b_1(\TS)})\subset \ext(\ol{b_1(\AB^*)})$ and the proof is complete.
\end{proof}

We can now give a description of the weak-$*$ exposed points of the unit ball of $\AB^*$. Let $\bT$ be the unit circle.

%%%%%%%%%%%%%%%%%%%%%%%%%%%%%%%%%%%%%%%
\begin{thm} \label{T:exposed}
The weak-$*$ exposed points of  $\ol{b_1(\AB^*)}$ are the point masses $\lambda\delta_\zeta$ for $\zeta\in\bS_d$ and $\lambda\in \bT$.
\end{thm}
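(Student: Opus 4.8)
The plan is to characterize the weak-$*$ exposed points through their representing measures. Since $\AB$ sits as a closed subspace of $\rC(\bS_d)$, the Hahn--Banach theorem furnishes every $\phi\in\AB^*$ with a representing measure $\mu$ on $\bS_d$ satisfying $\|\mu\|=\|\phi\|$ and $\phi(g)=\int g\,d\mu$ for all $g\in\AB$. Because the weak-$*$ continuous functionals on $\AB^*$ are exactly the evaluations at elements of the predual $\AB$, a functional $\phi_0$ is weak-$*$ exposed precisely when it is the \emph{unique} maximizer over $\ol{b_1(\AB^*)}$ of $\phi\mapsto\re\phi(f)$ for some $f\in\AB$. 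The whole argument rests on determining, for a fixed $f$, which functionals attain this maximum.

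First I would observe that the maximum of $\re\phi(f)$ over $\ol{b_1(\AB^*)}$ equals $\|f\|_\infty$: evaluating at point masses gives the lower bound $\sup_\zeta|f(\zeta)|$, while $\re\int f\,d\mu\le\int|f|\,d|\mu|\le\|f\|_\infty\|\mu\|$ supplies the reverse. Since the exposing condition normalizes this maximum to $1$, the exposing functional must satisfy $\|f\|_\infty=1$. Set $K=\{\zeta\in\bS_d:|f(\zeta)|=1\}$, which is nonempty by compactness. For any norm-$\le 1$ representing measure $\mu$ of a maximizer, the chain $1=\re\int f\,d\mu\le\int|f|\,d|\mu|\le\|\mu\|\le 1$ must be an equality throughout, forcing $\|\mu\|=1$, $|\mu|(\bS_d\setminus K)=0$, and $d\mu=\bar f\,d\nu$ for a probability measure $\nu$ on $K$; conversely every such measure is a maximizer. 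Hence the maximizers of $\re\phi(f)$ are exactly the functionals $g\mapsto\int g\bar f\,d\nu$ with $\nu$ ranging over the probability measures supported on $K$.

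With this description the theorem splits into two matching halves. For one inclusion, given $\zeta\in\bS_d$ and $\lambda\in\bT$ I would take the peak function $g(z)=\tfrac12\bigl(1+\ip{z,\zeta}\bigr)$, which satisfies $g(\zeta)=1$ and, by Cauchy--Schwarz, $|g(z)|<1$ elsewhere on $\ol{\bB_d}$, and set $f=\bar\lambda g$. Then $\|f\|_\infty=1$ and $K=\{\zeta\}$, so the unique maximizer is the functional $g\mapsto\bar f(\zeta)g(\zeta)$, namely $\lambda\delta_\zeta$; this exhibits $\lambda\delta_\zeta$ as weak-$*$ exposed (and incidentally confirms it has norm $1$). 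For the reverse inclusion, suppose $f$ exposes some $\phi_0$. If $K$ contained two distinct points $\zeta_1,\zeta_2$, then $\bar f(\zeta_1)\delta_{\zeta_1}$ and $\bar f(\zeta_2)\delta_{\zeta_2}$ would be two distinct maximizers---equality would force $\bar f(\zeta_1)=\bar f(\zeta_2)$ (test against the constant $1$) and then $g(\zeta_1)=g(\zeta_2)$ for every $g\in\AB$, contradicting that $\AB$ separates the points of $\ol{\bB_d}$. Hence $K=\{\zeta\}$ is a singleton, the only probability measure on $K$ is $\delta_\zeta$, and $\phi_0=\bar f(\zeta)\delta_\zeta=\lambda\delta_\zeta$ with $\lambda=\bar f(\zeta)\in\bT$.

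The step I expect to be the crux is the clean identification of the maximizing measures as $\bar f\,d\nu$ with $\nu$ a probability measure on $K$: it simultaneously pins down all candidate exposed points and, through the point-separating property of $\AB$, translates uniqueness of the exposed point into the statement that $K$ is a single point. The ancillary facts---that distinct point evaluations are distinct functionals on $\AB$ and that the displayed $g$ genuinely peaks at $\zeta$---are routine.
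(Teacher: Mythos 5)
Your proof is correct, but it takes a genuinely different route from the paper's for the hard direction. The paper proves that point masses are exposed with essentially your peak-function argument (it uses $f(z)=(1+z_1)/2$ at $\zeta_1=(1,0,\ldots,0)$ plus unitary symmetry, where you use $\frac12(1+\ip{z,\zeta})$ directly), but for the converse it relies on the $\ell_1$-decomposition $\AB^*\simeq \HB_*\oplus_1\TS$ together with Lemma~\ref{extreme}: a weak-$*$ exposed point is extreme, hence lies in $\ext(\ol{b_1(\HB_*)})\cup\ext(\ol{b_1(\TS)})$; non-point-mass measures in $\TS$ are shown not extreme by restricting to a Borel set, while a norm-one $\phi\in\HB_*$ is ruled out because the exposing function attains its modulus at some $\zeta\in\bS_d$, producing the competing maximizer $\bar\lambda\delta_\zeta$, which is distinct from $\phi$ precisely because point masses are totally singular. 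Your argument bypasses the Henkin/totally singular machinery entirely: by identifying the full set of maximizers of $\phi\mapsto\re\phi(f)$ as the functionals $g\mapsto\int g\bar f\,d\nu$ with $\nu$ a probability measure on the peak set $K$, and using that $\AB$ separates points of $\bS_d$, you show the maximizer is unique if and only if $K$ is a singleton --- which treats Henkin, totally singular, and mixed functionals uniformly. In effect you globalize the idea in the paper's third paragraph, which needed $\phi\in\HB_*$ only to guarantee $\phi\neq\bar\lambda\delta_\zeta$. What each approach buys: yours is self-contained and slightly stronger (it describes the norm-attaining set of every unit-norm $f\in\AB$), whereas the paper's route recycles Lemma~\ref{extreme}, which is needed anyway in Corollary~\ref{C:none} to pass from exposed points to extreme points and to conclude that $\ol{b_1(\HB_*)}$ has no extreme points. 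One small gloss: in your equality-chain step, passing from $\re\int f\,d\mu=\int|f|\,d|\mu|$ to $d\mu=\bar f\,d|\mu|$ uses the polar decomposition $d\mu=h\,d|\mu|$ with $|h|=1$ $|\mu|$-a.e.\ and the pointwise fact that $\re(fh)\le|f|$ with equality forcing $h=\bar f$ on $K$; this is routine but worth a sentence.
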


\begin{proof}
First we show that the point masses are exposed. 
Let $f(z)=(1+z_1)/2$ for every $z\in \bB_d$. 
Then $\|f\|_{\infty}=1$ and $\delta_{\zeta_1}(f)=1$ where $\zeta_1=(1,0,\ldots,0)\in \bS_d$.
Let now $\phi\in\ol{b_1(\AB^*)}$. 
By the Hahn-Banach theorem there is a measure $\mu$ on $\bS_d$ with $\|\mu\|=\|\phi\|$ such that
\[
\phi(g)=\int_{\bS_d}gd\mu \qforal g\in \AB.
\]
In view of the inequalities
\[
 \left|\int_{\bS_d} f\ d\mu\right| \leq \int_{\bS_d}\frac{|1+z_1|}{2}d|\mu| \leq  \|\mu\|\leq 1
\]
we see that $\re \phi(f)=1$ only if $\mu$ is positive, supported on $\{\zeta_1\}$ and $\|\mu\|=1$, which forces $\phi=\delta_{\zeta_1}$. 
We conclude that $\delta_{\zeta_1}$ is exposed. 
Scalar multiples of $\delta_{\zeta_1}$ are dealt with similarly. 
By symmetry, we see that all other point masses $\lambda \delta_{\zeta}$ are exposed points of $\ol{b_1(\AB^*)}$.

Next consider $\mu \in\TS$ with $\|\mu\|=1$ which is not a point mass. 
Then, there exists a Borel set $A\subset \bS_d$ such that $0<|\mu|(A)<1$. Let $\mu_A$ 
and $\mu_{\bS_d\setminus A}$ be the restrictions of $\mu$ to $A$ and $\bS_d\setminus A$ respectively. 
In particular, these measures are both absolutely continuous with respect to $\mu$ and thus lie in $\TS$. Moreover, 
\[
\frac{1}{|\mu|(A)}\mu_A\in \ol{b_1(\TS)} \quad \text{and} \quad \frac{1}{|\mu|(\bS_d\setminus A)}\mu_{\bS_d\setminus A}\in \ol{b_1(\TS)} .
\]
Since $|\mu|(\bS_d)=\|\mu\|=1$, the decomposition 
\[
\mu=|\mu|(A)\left(\frac{1}{|\mu|(A)}\mu_A \right)+|\mu|(\bS_d\setminus A)\left(\frac{1}{|\mu|(\bS_d\setminus A)}\mu_{\bS_d\setminus A} \right)
\]
shows that $\mu$ is not an extreme point of $\ol{b_1(\TS)}$.
By Lemma \ref{extreme}, we see that $\mu$ is not an extreme point of the unit ball of $\AB^*$,
and thus is certainly not a weak-$*$ exposed point.

Finally, consider $\phi\in \HB_*$ with $\|\phi\|=1$.
If $\phi$ were a weak-$*$ exposed point, there would be a function $f \in \AB$ with $\|f\|_\infty =1$ such that
\[ 1 = \re \phi(f) > \re \psi(f) \qforal \psi\in \ol{b_1(\AB^*)}\setminus\{\phi\} .\]
However this is impossible because $f$ attains its norm at some point $\zeta\in\bS_d$. 
Indeed, if we suppose that $f(\zeta) = \lambda \in\bT$, then $(\bar\lambda \delta_\zeta)(f)=1$ while $\bar\lambda \delta_{\zeta}\neq \phi$. 

By virtue of Lemma \ref{extreme}, we see that the only weak-$*$ exposed points are point masses.
\end{proof}

We can now prove the statement in the title of the paper.

%%%%%%%%%%%%%%%%%%%%%%%%%%%%%%%%%%%%%%%
\begin{cor} \label{C:none}
The extreme points of  $\ol{b_1(\AB^*)}$ are the point masses $\lambda\delta_\zeta$ for $\zeta\in\bS_d$ and $\lambda\in \bT$.
In particular,  $\ol{b_1(\HB_*)}$ has no extreme points.
\end{cor}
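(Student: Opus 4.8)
The plan is to assemble the three preceding results---Theorem \ref{T:Klee}, Theorem \ref{T:exposed}, and Lemma \ref{extreme}---rather than to argue from scratch. First I would apply Theorem \ref{T:Klee} with $E=\AB$ and $C=\ol{b_1(\AB^*)}$. This requires two observations: that $\AB$ is a separable Banach space, which holds because it is a closed subspace of $\rC(\bS_d)$ and $\bS_d$ is a compact metric space; and that $\ol{b_1(\AB^*)}$ is a weak-$*$ compact convex set, which is Banach--Alaoglu. The theorem then yields that the weak-$*$ closure of $\wexp(\ol{b_1(\AB^*)})$ contains every extreme point of $\ol{b_1(\AB^*)}$.

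Next I would invoke Theorem \ref{T:exposed}, which identifies $\wexp(\ol{b_1(\AB^*)})$ with the set $P=\{\lambda\delta_\zeta : \zeta\in\bS_d,\ \lambda\in\bT\}$ of point masses. The key step is to check that $P$ is already weak-$*$ closed, so that taking the weak-$*$ closure in the conclusion of Theorem \ref{T:Klee} adds nothing. For this, note that the map $(\lambda,\zeta)\mapsto\lambda\delta_\zeta$ from the compact space $\bT\times\bS_d$ into $\AB^*$ with its weak-$*$ topology is continuous, since for each fixed $g\in\AB$ the scalar $\lambda g(\zeta)$ depends continuously on $(\lambda,\zeta)$; hence $P$ is the continuous image of a compact set and is therefore weak-$*$ compact, in particular weak-$*$ closed. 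Combining this with the previous paragraph gives $\ext(\ol{b_1(\AB^*)})\subseteq P$, and the reverse inclusion holds because every weak-$*$ exposed point is a fortiori extreme, so that $P=\wexp(\ol{b_1(\AB^*)})\subseteq\ext(\ol{b_1(\AB^*)})$. This proves the first assertion.

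For the statement in the title, I would feed the identity just obtained into Lemma \ref{extreme}, which gives $\ext(\ol{b_1(\HB_*)})\subseteq\ext(\ol{b_1(\AB^*)})=P$. Every element of $P$ is a point mass on the sphere and thus lies in $\TS$ (as its treatment in the proof of Theorem \ref{T:exposed} already presupposes), while every extreme point of $\ol{b_1(\HB_*)}$ is a norm-one element of $\HB_*$. Because $\HB_*\cap\TS=\{0\}$ and $0$ is not a unit vector, no element of $P$ belongs to $\HB_*$; therefore $\ext(\ol{b_1(\HB_*)})=\varnothing$, as desired.

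The mathematical substance has all been absorbed into the earlier results, so I expect the only point requiring genuine care to be the verification that $P$ is weak-$*$ closed: this is precisely what upgrades Theorem \ref{T:Klee}'s conclusion about the \emph{closure} of the exposed points into an exact description of the extreme points. Everything else is bookkeeping with the $\ell^1$-decomposition $\AB^*\simeq\HB_*\oplus_1\TS$.
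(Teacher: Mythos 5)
Your proposal is correct and takes essentially the same route as the paper's proof: apply Theorem \ref{T:Klee} together with Theorem \ref{T:exposed}, observe that the set of point masses is weak-$*$ compact (the paper notes it is homeomorphic to $\bS_d\times\bT$; your continuous-image-of-$\bT\times\bS_d$ argument is the same observation), and conclude via Lemma \ref{extreme} using that point masses lie in $\TS$ while $\HB_*\cap\TS=\{0\}$. Your added checks (separability of $\AB$, Banach--Alaoglu, and that weak-$*$ exposed points are extreme) are exactly the details the paper leaves implicit, so there is nothing to correct.
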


\begin{proof}
By Theorem~\ref{T:exposed}, the set of weak-$*$ exposed points of $\ol{b_1(\AB^*)}$  is 
\[
\{\lambda\delta_\zeta : \zeta\in\bS_d,\ \lambda\in\bT\}.
\]
It is easy to verify that when it is endowed with the weak-$*$ topology of $\AB^*$, 
this set is homeomorphic to $\bS_d\times\bT$ and thus is weak-$*$ compact.
By Theorem~\ref{T:Klee}, this set must therefore coincide with the set of extreme points of $\ol{b_1(\AB^*)}$. 
By Lemma \ref{extreme}, we see that $\ol{b_1(\HB_*)}$ has no extreme points as point masses lie in $\TS$.
\end{proof}

%%%%%%%%%%%%%%%%%%%%%%%%%%%%%%%%%%%%%%%
%%%%%%%%%%%%%% The bibliography %%%%%%%%%%%%%%%%
\bibliographystyle{amsplain}

\end{document}